\newtheorem{theorem}{Theorem}[section]
\newtheorem{question}[theorem]{Question}
\newtheorem{lemma}[theorem]{Lemma}
\theoremstyle{definition}
\title{Exponential bounds for monochromatic sums equal to products}
\author{Matt Bowen}
\address{Mathematical Institute, Radcliffe Observatory Quarter, Woodstock Road, Oxford OX2 6GG, England}
\email{matthew.bowen@maths.ox.ac.uk}
\thanks{The author is supported by Ben Green's Simons Investigator Grant number 376201}
\date{August 2024}
\begin{document}

\begin{abstract}
    We show that any $r$-coloring of $\{1,...,r^{r^{r^{3r}}}\}$ contains monochromatic sets $\{a,b,a+b,x,y,xy\}$ with $a+b=xy.$
\end{abstract}

\maketitle

\section{Introduction}

Answering a question of Csikv\'{a}ri, Gyarmati and S\'{a}rk\H{o}zy \cite{csikvari2012density}, Bergelson and Hindman independently gave proofs of the following result.

\begin{theorem}\cite{hindman2009monochromatic}\cite{bergelson2010ultrafilters}\label{thm1}
    Any finite coloring of $\mathbb{N}$ contains monochromatic sets $\{a,b,x,y\}$ with $a+b=xy.$
\end{theorem}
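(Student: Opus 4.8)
The plan is to read the statement as partition regularity of the non-linear equation $a+b=xy$, and to obtain it from the partition regularity of the quadratic equation $x+y=z^{2}$: if $\{a,b,z\}$ is a monochromatic solution of $a+b=z^{2}$, then taking $x=y=z$ gives a monochromatic $\{a,b,x,y\}$ with $a+b=xy$, and since typical solutions have $a\neq b$ and $z\notin\{a,b\}$ this set is non-degenerate. Partition regularity of $x+y=z^{2}$ is a theorem of Khalfalah and Szemer\'edi, so one may simply quote it; I will instead sketch a self-contained argument, and along the way explain why this is genuinely harder than any purely additive or purely multiplicative Ramsey statement. It cannot be localised to one colour class: for $A=\{n\equiv1\pmod 4\}$ one has $a+b\equiv2$ while $xy\equiv1\pmod 4$, so this density-$\tfrac14$ set has no solution at all, and all $r$ colour classes must be exploited simultaneously.

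To prove the needed partition regularity directly I would run a circle-method / density-increment argument inside $[1,N]$ for $N$ large in terms of $r$. For a colour class $A$ put $f=\mathbf 1_{A\cap[1,N]}$ and write the solution count as $\sum_{a+b=xy}f(a)f(b)f(x)f(y)=\int_{0}^{1}\widehat f(\alpha)^{2}\,B(\alpha)\,d\alpha$, where $B(\alpha)=\sum_{u,v}f(u)f(v)e(-\alpha uv)$ is a bilinear exponential sum. The major-arc analysis yields the expected main term; if for some colour the minor-arc error does not dominate it, that colour already contains a solution, and if it does dominate then the Fourier mass of some colour is concentrated at a rational of bounded denominator, so that restricting to the corresponding long arithmetic progression increases that colour's relative density. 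The increment can only happen $O_{r}(1)$ times. The minor-arc estimate for $B(\alpha)$ is elementary --- it is a bilinear sum, handled by Cauchy--Schwarz and a geometric series --- but controlling this joint additive--multiplicative sum is the analytic heart of the matter. Finally I would discard the cheap families (those with $a=b$, or with $|\{a,b,x,y\}|\le2$, of which there are only $O(N\log N)$) to extract a genuine monochromatic $\{a,b,x,y\}$.

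An alternative is an ultrafilter argument in the style of Bergelson. A multiplicatively idempotent ultrafilter $p=p\odot p$ already forces a monochromatic multiplicative Schur triple $x,y,xy$ inside its colour class $A$, since $\{x:x^{-1}A\in p\}\in p$; the real work is to place $a,b$ in the \emph{same} class with $a+b=xy$. For that I would mesh $p$ with a minimal idempotent $q$ of $(\beta\mathbb N,+)$, using that left multiplication by a fixed integer is an additive endomorphism of $\beta\mathbb N$ together with the identity $x(u+v)=xu+xv$, so that the additive Schur triple gets built inside a dilate compatible with the multiplicative one. In both approaches the obstacle is the same and is intrinsic: there is no ultrafilter that is simultaneously idempotent for $+$ and for $\times$, so the two operations have to be synchronised by hand (analytically, the minor-arc control of $B$ plays exactly this role); once that is done, everything that remains is routine.
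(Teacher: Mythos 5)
Your main route collapses at the very first step. Putting $x=y=z$ turns the target configuration $\{a,b,x,y\}$ into the three-element set $\{a,b,z\}$ with $a+b=z^2$, so you need $a$, $b$ \emph{and} $z$ all in the same colour class. That statement is false: Csikv\'ari, Gyarmati and S\'ark\H{o}zy --- in the very paper whose question Theorem \ref{thm1} answers --- construct a finite colouring of $\mathbb{N}$ admitting no monochromatic solution of $a+b=c^2$, and the failure of partition regularity for this equation is precisely why they asked about $a+b=cd$ instead. The Khalfalah--Szemer\'edi theorem you quote only places $x$ and $y$ in the same class; $z$ is a free, uncoloured variable. Your circle-method sketch inherits exactly this limitation: a density increment performed on the class containing $a$ and $b$ gives no control over the colour of $z$, and no amount of minor-arc work can supply it, since the all-three-monochromatic statement is simply not true. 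The ultrafilter alternative stops exactly where the difficulty begins (``the real work is to place $a,b$ in the same class\ldots synchronised by hand''), so neither branch amounts to a proof.

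For contrast, the paper's argument never degenerates to a square and uses no analysis or ultrafilters at all. Its engine is dilation invariance of Schur triples: if $S=\{a,b,a+b\}$ then so is $nS$. Lemma \ref{claim} applies Ramsey's theorem iteratively to build sets $S_1,\dots,S_M$ with distinguished elements $s_i$ so that the colour of $s_{i,j}\cdot s$ for $s\in S_j\cup\{\sum_{s'\in S_j}s'\}$ depends only on $(i,j)$; a second application of Ramsey's theorem to the products $s_{i,j}$ (the multiplicative Schur argument) then produces $x_1,\dots,x_m$ whose product equals $s_{i_1,l}\cdot\sum_{s'\in S_l}s'$, which is simultaneously a sum of monochromatic elements. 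This keeps $x$ and $y$ genuinely distinct, requires only Ramsey's theorem, and yields the explicit bound $H(r)<r^{r^{r^{3r}}}$. If you want a workable reduction, it must be to $a+b=xy$ with $x\neq y$, not to $a+b=z^2$.
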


In subsequent years many generalizations and alternate proofs of this fact have been discovered; see for example \cite{baglini2014partition,baglini2015nonstandard,bowen2022monochromatic,di2018ramsey,farhangi2021partition}.  In contrast to the simplicity of its statement, to our knowledge all previous proofs of Theorem \ref{thm1} rely on infinatary arguments.  

This has limited the accessibility of the result. For example, in their textbook \cite[Theorem 9.47]{landman2014ramsey}, Landman and Robertson advertised Theorem \ref{thm1} but did not include its proof, stating ``The tools that are used to prove these results are (far) beyond the scope of this book."

The infinatary nature of the aforementioned arguments also do not provide any quantitative bounds on the Ramsey numbers for the configuration from Theorem \ref{thm1}.  This is in contrast to the variant of the result in the modular setting, where the quantitative aspects of the problem are well understood: S\'{a}rk\H{o}zy \cite{sarkozy2005sums} gave good upper bounds for the analogue of Theorem \ref{thm1} in $\mathbb{Z}/p\mathbb{Z},$ and his argument was later extended to provide similar bounds in $\mathbb{Z}/{p^r}\mathbb{Z}$ by Gyarmati and S\'{a}rk\H{o}zy \cite{gyarmati2008equations} and extended further still to $\mathbb{Z}/m\mathbb{Z}$ by Pach \cite{pach2012ramsey}. 

In this note we give a proof of (extensions of) Theorem \ref{thm1} that only relies on Ramsey's theorem and gives an exponential upper bound.

\begin{theorem}\label{thm: m}
    Let $H(r)$ be the smallest integer such that any $r$-coloring of $\{1,...,H(r)\}$ contains a monochromatic set $\{a,b,a+b,x,y,xy\}$ with $a+b=xy.$  Then 

    $$2^{2^r}<H(r)<r^{r^{r^{3r}}}.$$

    More generally, if $R(k;r)$ is the $r$-color Ramsey number for $k-cliques,$ then any $r$-coloring of $R(n+1;r^{R(m+1;r)})^{R(m+1;r)}$ contains a monochromatic set $\{a_i, \sum_{i=1}^na_i, x_j, \prod_{j=1}^mx_j: i\leq m, j\leq n\}$ with $\sum_{i=1}^na_i=\prod_{j=1}^mx_j.$

\end{theorem}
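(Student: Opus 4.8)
The plan is to apply Ramsey's theorem for cliques twice — once to manufacture the additive part and once the multiplicative part — and to couple the two through a colouring that records information at several multiplicative ``scales''. Write $M:=R(m+1;r)$ and $K:=R(n+1;r^M)$, so the statement concerns $\{1,\dots,K^M\}$.

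\emph{Additive step.} First I would $r^M$-colour $\{1,\dots,K\}$ by $\psi(d):=\bigl(\chi(d),\chi(dK),\dots,\chi(dK^{M-1})\bigr)$, which is legitimate since $dK^{j}\le K^M$. Colour the edge $\{u,v\}$ of the complete graph on $\{1,\dots,K\}$ by $\psi(|u-v|)$; since $K=R(n+1;r^M)$ there is a monochromatic $(n+1)$-clique $v_0<\dots<v_n$ all of whose pairwise differences have one $\psi$-value $(c_0,\dots,c_{M-1})$. With $a_i:=v_i-v_{i-1}$ and $S:=v_n-v_0=\sum_{i=1}^n a_i$ we then have $1\le S<K$ and, for all $i\le n$, $j\le M-1$,
$$\chi(a_iK^j)=\chi(SK^j)=c_j.$$
Thus at every scale $K^j$ the set $\{a_1K^j,\dots,a_nK^j,SK^j\}$ is monochromatic of colour $c_j$ and realises $\sum_i a_iK^j=SK^j$, and these $M$ scaled copies are tied together through the numbers $a_i,S$.

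\emph{Multiplicative step.} Next I would form the divisibility chain $1=N_0\mid N_1=S\mid N_2=SK\mid\dots\mid N_M=SK^{M-1}$ and colour the edge $\{s,t\}$ ($s<t$) of the complete graph on $\{0,1,\dots,M\}$ by $\chi(N_t/N_s)$. Having more than $R(m+1;r)$ vertices, this graph contains a monochromatic $(m+1)$-clique $j_0<\dots<j_m$, and with $x_l:=N_{j_l}/N_{j_{l-1}}$ the numbers $x_1,\dots,x_m$ and $\prod_{l=1}^m x_l=N_{j_m}/N_{j_0}$ all get one colour $\gamma$. The reason for this chain is that the ratios out of the root $N_0=1$ are precisely the scaled sums $SK^{t-1}$, whose colours $c_{t-1}$ are controlled by the additive step, whereas every other ratio is a pure power of $K$. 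Hence if the monochromatic clique can be chosen to contain the root, so $j_0=0$, then $\prod_l x_l=N_{j_m}=SK^{j_m-1}$ with $j_m-1\le M-1$ and $\chi(SK^{j_m-1})=c_{j_m-1}=\gamma$, so that
$$\{a_iK^{j_m-1}:i\le n\}\ \cup\ \{x_l:l\le m\}\ \cup\ \Bigl\{\textstyle\prod_l x_l\Bigr\}$$
is a monochromatic set of colour $\gamma$ with $\sum_i a_iK^{j_m-1}=SK^{j_m-1}=\prod_l x_l$ — the configuration we want.

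\emph{The main obstacle.} Ramsey's theorem only delivers \emph{some} monochromatic clique, and a priori it need not contain the root: then $\prod_l x_l$ is a pure power of $K$, whose colour may not occur among the $c_j$, and there is no additive witness at that value. To handle this I would enrich both colourings — recording $\chi(K^j)$ beside the $\psi$-data, and colouring the multiplicative graph by the triple $\bigl(\chi(N_t/N_s),\chi(N_s),\chi(N_t)\bigr)$ — and then run a pigeonhole argument on the colour profiles $(c_j)_j$ and $(\chi(K^j))_j$ that either pushes the root into a monochromatic clique of the prevalent colour or yields the configuration at a suitable scale directly. Calibrating how much of this bookkeeping is genuinely needed is what fixes the precise shape $R(n+1;r^{R(m+1;r)})^{R(m+1;r)}$ of the bound. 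Granting the combinatorial core, the numerical claims about $H(r)$ are routine: the upper bound $r^{r^{r^{3r}}}$ follows from plugging a crude estimate such as $R(k;r)\le r^{rk}$ into the two Ramsey numbers and simplifying, and the lower bound $2^{2^r}<H(r)$ from an explicit $r$-colouring of an interval of that length that avoids the configuration.
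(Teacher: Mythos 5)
Your first half is sound and is close in spirit to the paper's Lemma~2.2 (the colouring $\psi(d)=(\chi(d),\chi(dK),\dots,\chi(dK^{M-1}))$ is exactly the kind of ``record the colours of all dilates'' trick the paper uses), but the multiplicative step has a genuine gap that your own ``main obstacle'' paragraph does not close. In your chain $1=N_0\mid S\mid SK\mid\dots\mid SK^{M-1}$, every ratio $N_t/N_s$ with $s\geq 1$ is a pure power of $K$, and only the ratios out of the root are of the form $SK^{t-1}$. Ramsey's theorem on $\{0,1,\dots,M\}$ gives you \emph{some} monochromatic $(m+1)$-clique and gives you no control over whether it contains the vertex $0$; if it does not, then $\prod_l x_l=K^{j_m-j_0}$, and your additive step supplies monochromatic sum representations only for numbers of the form $SK^j$, so there is simply no additive witness at that value. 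The proposed repair (recording $\chi(K^j)$, colouring edges by triples, and a pigeonhole on the profiles $(c_j)$ and $(\chi(K^j))$) is not carried out, and I do not see how it can be: enlarging the edge-colouring only makes the Ramsey number worse, and no pigeonhole on $M+1$ vertices forces a distinguished vertex into a monochromatic clique. The difficulty is structural: a chain built from powers of a single number $K$ has additive content only at its root.

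The paper's proof avoids this by making \emph{every} edge of the multiplicative graph expandable into a sum. It inductively constructs $M$ additive Schur sets $S_1,\dots,S_M$ of size $n$, with $S_{i+1}\subset[N_{i+1}]$, $N_i=R(n+1;r^i)$, and distinguished elements $s_i\in S_i$, where $S_{i+1}$ is found by applying Ramsey's theorem to the $r^{i+1}$-colouring of $[N_{i+1}]$ recording the colours of $n$ and of $s_{j,i+1}\cdot n$ for all $j\leq i$ (here $s_{i,j}=s_i\cdots s_{j-1}$). This yields that the colour of $s_{i,j}\cdot s$ is constant over $s\in S_j\cup\{\sum_{s'\in S_j}s'\}$. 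The multiplicative Ramsey step is then applied to the edge-colouring $(i,j)\mapsto\chi(s_{i,j})$; whatever clique $i_1<\dots<i_{m+1}$ comes out, the last factor $s_{l}$ of $s_{i_m,i_{m+1}}$ (with $l=i_{m+1}-1$) is swapped for $\sum_{s'\in S_l}s'$, and the dilated set $s_{i_1,l}\cdot S_l$ supplies the $a_i$ --- no root condition is needed because the additive structure lives at every level, not just at the bottom. Separately, note that your final sentence asserts rather than proves the lower bound; the paper obtains it by transporting an additive-Schur-avoiding colouring to a multiplicative one via $n\mapsto\sum_p m_{p,n}$ (the number of prime factors with multiplicity), together with $S_+(r)>\tfrac{3^r-1}{2}$.
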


Neither the listed bounds nor the configurations considered are the best possible that follow from our argument, but we will present Theorem \ref{thm: m} as stated for ease of exposition.  The lower bound for $H(r)$ is just a (sub-optimal) lower bound for the multiplicative Schur number controlling the configuration $\{x,y,xy\}$ and makes no reference to the additive content of the theorem.  Notice that in Csikv\'{a}ri, Gyarmati and S\'{a}rk\H{o}zy's original question this set was not required to be monochromatic, so it is plausible that there is an upper bound for Theorem \ref{thm1} that is better than the lower bound for Theorem \ref{thm: m}.

Finally, we end the introduction by pointing out one of the simplest results of this type that we still do not know how to provide reasonable bounds for.

\begin{question}
    Let $B(r)$ be the smallest integer such that any $r$-coloring of $\{1,...,B(r)\}$ contains sets $X=\{x_1,x_2,x_1+x_2\}$ and $Y=\{y_1,y_2,y_1+y_2\}$ with $X\cup Y\cup XY$ monochromatic. Is there an exponential upper bound (i.e., a tower of height independent of $r$) for $B(r)?$
\end{question}

The fact that $B(r)$ is finite for all $r$ follows from a result of Beigleb\"ock, Bergelson, Hindman, and Strauss \cite[Theorem 3.7]{beiglbock2008some} which was proven using ultrafilter techniques.  Our proof of Theorem \ref{thm: m} can be adapted to give an elementary proof of this result, but at the expense of requiring a refinement step that leads to bounds with a tower type dependence on $r.$
    
\section{Proof of Theorem \ref{thm: m}}

Before beginning, we fix a small amount of notation.  We denote by $R(k;r)$ the smallest integer such that for any $r$-coloring of pairs $1\leq i<j\leq R(k;r)$  there are $i_1<i_2<...<i_k$ such that all pairs $(i_g,i_h)$ with $g<h$ are monochromatic. Given a sequence $s_1,s_2,...\in \mathbb{N},$ for $i\leq j$ set $s_{i,j}=s_i\cdot s_{i+1}\cdot...\cdot s_{j-1},$ where by convention $s_{i,i}=1.$

Our argument is an extension of the standard proof of Schur's theorem based on Ramsey's theorem as found in \cite{graham1991ramsey}.  To motivate the main trick, observe that if $S=\{a,b,a+b\},$ then for any $n\in\mathbb{N}$ we have $nS=\{a',b',a'+b'\}.$  This `dilation invariance' will allow us to reduce the $a+b=xy$ Ramsey problem directly to the standard proof of the $xy=z$ Ramsey problem with only a relatively small loss in the bounds we obtain.  This reduction is the content of Lemma \ref{claim}.  A similar trick could be used for any other finite dilation invariant Ramsey family (such as any of the configurations obtained from Rado's theorem), but we will focus on Theorem \ref{thm: m} for ease of exposition.

\begin{lemma}\label{claim}
    In any finite coloring of $\{1,...,R(n+1;r^{M})^M\},$ there are finite sets $S_1,...,S_M$ of size $n$ with distinguished elements $s_i\in S_i$ such that for any $i\leq j\leq M$ the color of $s_{i,j}\cdot s$ for $s\in S_{j}\cup \{\sum_{s'\in S_{j}}s'\}$ depends only on $i$ and $j$ and not on the choice of $s\in S_{j}\cup \{\sum_{s'\in S_{j}}s'\}.$
\end{lemma}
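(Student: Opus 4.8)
The plan is to build the sets $S_1,\dots,S_M$ together with their distinguished elements one at a time, in increasing order of the index, running at each stage the Ramsey proof of Schur's theorem on an auxiliary coloring that records the colors of all the relevant dilates of the choices already made. Write $P=R(n+1;r^M)$ and let $c$ be the given $r$-coloring of $\{1,\dots,P^M\}$. The building block is the following one-level statement, which is just the standard Ramsey proof of Schur's theorem: given any $s$-coloring $\gamma$ of $\{1,\dots,R(n+1;s)\}$, color the pair $\{a,b\}$ with $a<b$ by $\gamma(b-a)$ and extract a monochromatic $(n+1)$-clique $i_1<\dots<i_{n+1}$; then the $n$-element set $T=\{\,i_{t+1}-i_t : 1\le t\le n\,\}\subseteq\{1,\dots,R(n+1;s)\}$ has $\gamma$ constant on it, together with $\sum_{t\in T}t=i_{n+1}-i_1$, which is again a difference of two clique vertices and so has the same $\gamma$-color.

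For the inductive step, suppose $S_1,\dots,S_{j-1}$ and $s_1,\dots,s_{j-1}$ have been chosen so that every element of every $S_t$ and every sum $\sum_{s'\in S_t}s'$ is at most $P$. Put $D_j=\{\,s_{i,j}:1\le i\le j\,\}$, so that $1\in D_j$ (from $s_{j,j}=1$), $\lvert D_j\rvert\le j\le M$, and each $s_{i,j}=s_i s_{i+1}\cdots s_{j-1}$ is a product of at most $M-1$ factors each $\le P$, hence $s_{i,j}\le P^{M-1}$. Define $\gamma_j\colon\{1,\dots,P\}\to [r]^{D_j}$ by $\gamma_j(x)=\bigl(c(dx)\bigr)_{d\in D_j}$; this is well defined since $dx\le P^{M-1}\cdot P=P^M$, and it uses at most $r^{\lvert D_j\rvert}\le r^M$ colors, so $R(n+1;r^{\lvert D_j\rvert})\le R(n+1;r^M)=P$ and the building block applies to $\gamma_j$ on $\{1,\dots,R(n+1;r^{\lvert D_j\rvert})\}\subseteq\{1,\dots,P\}$. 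Take $S_j$ to be the resulting set $T$ and $s_j$ any of its elements. Constancy of $\gamma_j$ on $S_j\cup\{\sum_{s'\in S_j}s'\}$ says exactly that for every $d\in D_j$ the color $c(d\cdot s)$ is the same for all $s\in S_j\cup\{\sum_{s'\in S_j}s'\}$; specializing to $d=s_{i,j}$ for each $i\le j$ gives the property demanded by the lemma for the pairs $(i,j)$, with the common value determined by $s_{i,j}$ and $j$, hence depending only on $i$ and $j$. Finally $S_j$ and its sum are differences of vertices of an $(n+1)$-clique inside $\{1,\dots,P\}$, so the inductive hypothesis is preserved; after $M$ steps the construction is complete.

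The only point that needs real attention is the size bookkeeping: one must check that every product $s_{i,j}\cdot s$ that is handed to $c$ lies in $\{1,\dots,P^M\}$. This is exactly what the two facts isolated above secure, namely that the Schur building block keeps each $S_j$ and its total sum inside $\{1,\dots,P\}$ (they are all differences of vertices of an $(n+1)$-clique living in an interval of length $\le P$), and that each partial product $s_{i,j}$ involves at most $M-1$ of the distinguished elements, each $\le P$. Together these make $P^M=R(n+1;r^M)^M$ precisely the right length of interval to begin with, matching the bound in the statement.
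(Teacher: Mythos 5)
Your proof is correct and follows essentially the same route as the paper's: an induction on the index $j$ in which one applies the Ramsey--Schur difference argument to the auxiliary product coloring $x\mapsto\bigl(c(s_{i,j}\cdot x)\bigr)_{i\le j}$. The only differences are cosmetic --- you work inside the fixed interval $\{1,\dots,R(n+1;r^M)\}$ rather than the nested intervals $[N_i]=[R(n+1;r^i)]$, and you spell out the size bookkeeping that the paper leaves implicit.
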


Before proving Claim \ref{claim}, let us use it to prove the upper bounds from Theorem \ref{thm: m}.

\begin{proof}[Proof of the upper bounds from Theorem \ref{thm: m}]
    This will follow from applying Ramsey's Theorem to the sequence of $s_i$ from Lemma \ref{claim} as in the standard proof of Schur's Theorem.  To be more precise, set $M=R(m+1,r)$ and fix an $r$-coloring of $\{1,...,R(n+1; r^{M})^M\}$.  Let $s_i\in S_i$ be as in Lemma \ref{claim}.  For $1\leq i<j\leq M,$ color pair $(i,j)$ based on the color of $s_{i,j}.$  By our choice of $M$, there are $i_1<...<i_{m+1}$ such that all pairs $(i_g,i_h)$ for $g<h$ are monochromatic.  Let $l=i_{m+1}-1$ and let $a_1,...,a_n$ be an arbitrary enumeration the elements of $s_{i_1,l}\cdot S_{l}.$  For $j<m$ let $x_j=s_{i_j,i_{j+1}}$ and set $x_m=s_{i_m,l}\cdot \sum_{s'\in S_{l}}s'.$  Finally, observe that $\sum_{i=1}^na_i=\prod_{j=1}^mx_j$ and that these terms together with the $a_i$ and $x_j$ are monochromatic by our choice of $i_1,...,i_{m+1}$ and Lemma \ref{claim}. 

    The bound in the $m=n=2$ case now follows from the (weak) upper bound $R(3;r)\leq r^r$ \cite{chung1983survey}.
\end{proof}

We now prove Lemma \ref{claim}, which once again follows easily from Ramsey's theorem.

\begin{proof}
    Fix an $r$-coloring of $\{1,...,R(n+1;r^{M})^M\}.$  For $i\in [M]$ let $N_i=R(n+1; r^{i}).$  We inductively define $s_i\in S_i\subset [N_i]$ as follows.  In the base step, apply Ramsey's theorem to $[N_1]$ as in the usual proof of Schur's theorem to find a set $S_1\subset N_1$ with $S_1\cup \{\sum_{s'\in S_1}s'\}$ monochromatic.  Choose $s_1\in S_1$ arbitrarily.  Having defined $s_1,...,s_i,$ to find $S_{i+1}$ consider the $r^{i+1}$-coloring of $[N_{i+1}]$ where $n\in N_{i+1}$ is colored based on the tuple listing the color of $n$ and $s_{j,i+1}\cdot n$ for each $j\leq i.$  By our choice of $N_{i+1}$ and Ramsey's theorem, there is an $S_{i+1}\subset N_{i+1}$ such that $S_{i+1}\cup \{\sum_{s'\in S_{i+1}}s'\}$ is monochromatic with respect to this coloring, and in particular satisfies the conditions from Lemma \ref{claim}.  Choose $s_{i+1}\in S_{i+1}$ arbitrarily and we are done.  
\end{proof}

\vspace{3mm}

We end by giving the lower bound from Theorem \ref{thm: m}.  Let $S_+(r)$ be the smallest integer such that any $r$-coloring of $\{1,...,S_+(r)\}$ contains a monochromatic set $\{a,b,a+b\}$ and $S_\times(r)$ the smallest integer such that any $r$-coloring of $\{2,...,S_\times(r)\}$ contains a monochromatic set $\{x,y,xy\}$.  Any lower bound for $S_\times(r)$ gives a lower bound for Theorem \ref{thm: m}, so we establish one below.  The argument we give here is surely known (see e.g. \cite{142793}), but we could not find a reference to it in the literature.   

\begin{lemma}
    $S_\times(r)\geq 2^{S_+(r)-1}.$  In particular, $S_\times(r)\geq 2^{2^r}.$
\end{lemma}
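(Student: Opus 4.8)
The plan is to exploit the exponential map to transfer a multiplicative Schur configuration to an additive one. First I would observe that the function $n \mapsto 2^n$ carries a monochromatic additive triple $\{a,b,a+b\}$ to a monochromatic multiplicative triple $\{2^a, 2^b, 2^a \cdot 2^b\} = \{2^a, 2^b, 2^{a+b}\}$, since $2^a 2^b = 2^{a+b}$. So given any $r$-coloring $\chi$ of $\{2,\ldots,2^{S_+(r)-1}\}$, I would pull it back to an $r$-coloring $\chi'$ of $\{1,\ldots,S_+(r)-1\}$ by setting $\chi'(n) = \chi(2^n)$; this is well-defined because $2^n$ lies in $\{2,\ldots,2^{S_+(r)-1}\}$ whenever $1 \le n \le S_+(r)-1$.

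The next step is a small bookkeeping point about the definition of $S_+(r)$: it is the least integer so that any $r$-coloring of $\{1,\ldots,S_+(r)\}$ has a monochromatic $\{a,b,a+b\}$, hence by minimality there is an $r$-coloring of $\{1,\ldots,S_+(r)-1\}$ with \emph{no} such triple. I would use this to justify that if every $r$-coloring of $\{2,\ldots,2^{S_+(r)-1}\}$ forced a multiplicative triple, then via the pullback every $r$-coloring of $\{1,\ldots,S_+(r)-1\}$ would force an additive triple, contradicting minimality of $S_+(r)$. Therefore some $r$-coloring of $\{2,\ldots,2^{S_+(r)-1}\}$ avoids a monochromatic $\{x,y,xy\}$, which gives $S_\times(r) \ge 2^{S_+(r)-1}$, as the interval would need to extend at least one step further to be unavoidable. (One should be slightly careful about the convention for whether $x=y$ is allowed, but the exponential map respects this either way, so the inequality goes through unchanged.)

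For the ``in particular'' clause I would invoke the standard lower bound $S_+(r) \ge 2^r$ for the additive Schur number — this is classical (the greedy/recursive colouring giving $S_+(r) > (3^r-1)/2$, but even the cruder $S_+(r) \ge 2^r$ suffices), so I may cite it as known. Substituting, $S_\times(r) \ge 2^{S_+(r)-1} \ge 2^{2^r - 1}$, and a trivial adjustment (or simply using $S_+(r) \ge 2^r + 1$) absorbs the $-1$ to yield $S_\times(r) \ge 2^{2^r}$.

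I do not anticipate a serious obstacle here; the only thing requiring care is getting the off-by-one bookkeeping in the definitions of $S_+$ and $S_\times$ exactly right, together with making sure the pullback coloring lands in the correct interval $\{2,\ldots,2^{S_+(r)-1}\}$ (in particular that $2^n \ge 2$, i.e.\ $n \ge 1$, which is why the additive side is indexed from $1$). Everything else is the one-line identity $2^a 2^b = 2^{a+b}$.
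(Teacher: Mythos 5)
There is a genuine gap in the transfer step. Your pullback $\chi'(n)=\chi(2^n)$ only sees the values of $\chi$ on the powers of $2$, but the multiplicative Schur triple $\{x,y,xy\}$ that a coloring of $\{2,\ldots,2^{S_+(r)-1}\}$ is assumed to force need not consist of powers of $2$. Concretely: to derive a contradiction you must start from a bad additive coloring $c_+$ of $\{1,\ldots,S_+(r)-1\}$ and build from it a coloring of \emph{all} of $\{2,\ldots,2^{S_+(r)-1}\}$ with no monochromatic $\{x,y,xy\}$. Setting $c_\times(2^n)=c_+(n)$ determines the coloring only on the sparse set $\{2,4,8,\ldots\}$; however you extend it to the remaining integers, nothing prevents a monochromatic triple such as $\{3,5,15\}$ living entirely outside the image of the exponential map, so the argument ``multiplicative triple $\Rightarrow$ additive triple'' does not go through. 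Equivalently, in the direction you phrased it: given an arbitrary $\chi'$ on the additive interval, any $\chi$ extending it off the powers of $2$ may have its forced triple $\{x,y,xy\}$ among non-powers of $2$, which tells you nothing about $\chi'$.

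The repair is to replace the partial inverse ``$\log_2$'' by a totally defined completely additive function, and this is exactly what the paper does: color $n$ by $c_+(\Omega(n))$, where $\Omega(n)=\sum_p m_{p,n}$ is the number of prime factors of $n$ counted with multiplicity. Since $\Omega(xy)=\Omega(x)+\Omega(y)$ for \emph{all} $x,y\geq 2$, and since $2^{\Omega(n)}\leq n\leq 2^{S_+(r)-1}$ forces $1\leq\Omega(n)\leq S_+(r)-1$, this coloring is defined on the whole interval and transfers every multiplicative triple to an additive one. Your idea is the restriction of this to powers of $2$ (where $\Omega(2^n)=n$), so the intuition is right, but as stated the proof does not close. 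A minor secondary point: for the ``in particular'' clause your bound $2^{S_+(r)-1}\geq 2^{2^r-1}$ falls short by a factor of $2$, and you should justify $S_+(r)\geq 2^r+1$ rather than wave at it; the paper instead quotes $S_+(r)>\frac{3^r-1}{2}$, which suffices for $r\geq 2$.
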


\begin{proof}
    Consider an $r$-coloring $c_+$ of $\{1,...,S_+(r)-1\}$ with no monochromatic sets $\{a,b,a+b\}.$  Let $P$ be the set of primes.  Define a coloring $c_\times$ of $\{2,...,2^{S_+(r)-1}\}$ where the $c_\times$ color of $n=\prod_{p\in P}p^{m_{p,n}}\leq 2^{S_+(r)-1}$ is the $c_+$ color of $\sum_{p\in P}m_{p,n}.$  If there is a set $\{x,y,xy\}$ which is monochromatic with respect to $c_\times,$ then for $a=\sum_{p\in P}m_{p,x}$ and $b=\sum_{p\in P}m_{p,y}$ we have have that $\{a,b,a+b\}$ is monochromatic with respect to $c_+,$ a contradiction.

    The in particular part of the Lemma now follows from the well known bound $S_+(r)> \frac{3^r-1}{2}$ \cite{chung1983survey}.
\end{proof}

\bibliographystyle{amsalpha}
\bibliography{bib}

\end{document}